\begin{document}

\begin{center}
\title{Mixing actions of countable groups are almost free}
\author{Robin D. Tucker-Drob}
\end{center}

\begin{abstract}
A measure preserving action of a countably infinite group $\Gamma$ is called totally ergodic if every infinite subgroup of $\Gamma$ acts ergodically. For example, all mixing and mildly mixing actions are totally ergodic. This note shows that if an action of $\Gamma$ is totally ergodic then there exists a finite normal subgroup $N$ of $\Gamma$ such that the stabilizer of almost every point is equal to $N$. Surprisingly the proof relies on the group theoretic fact (proved by Hall and Kulatilaka as well as by Kargapolov) that every infinite locally finite group contains an infinite abelian subgroup, of which all known proofs rely on the Feit-Thompson theorem.

As a consequence we deduce a group theoretic characterization of countable groups whose non-trivial Bernoulli factors are all free: these are precisely the groups that posses no finite normal subgroup other than the trivial subgroup.
\end{abstract}
\maketitle

\section{Introduction}
Let $\Gamma$ be a countably infinite discrete group and let $\bm{a}$ be a measure preserving action of $\Gamma$, i.e., $\bm{a} = \Gamma \cc ^a (X,\mu )$ where $X$ is a standard Borel space, $\mu$ is a Borel probability measure on $X$, and $a:\Gamma\times X\ra X$ is a Borel action of $\Gamma$ on $X$ that preserves $\mu$. In this note we examine how ergodicity and mixing properties of $\bm{a}$ can influence, and be influenced by, the freeness behavior of $\bm{a}$ and its factors. When $\bm{a}$ is not ergodic for example, the ergodic decomposition of $\bm{a}$ directly exhibits a \emph{non-trivial} action (i.e., with underlying measure not a point mass) that is a factor of $\bm{a}$ which is non-free.

More generally, if $\Gamma$ contains some non-trivial normal subgroup $N$ for which the restriction $\bm{a}\resto N$ of $\bm{a}$ to $N$ is non-ergodic, then the action of $\Gamma$ on the set $Z$ of ergodic components of $\bm{a}\resto N$ corresponds to a non-trivial factor of $\bm{a}$ which is manifestly non-free.  Indeed, this factor is not even faithful as $N$ acts trivially on $Z$.

Working from the other direction, if $\pi : (X,\mu ) \ra (Y,\nu )$ factors $\bm{a}$ onto some non-trivial action $\bm{b} = \Gamma \cc ^b (Y,\nu )$ which is not faithful, then for any $B\subseteq Y$ with $0<\nu (B)<1$ the set $\pi ^{-1}(B)$ will be a non-trivial subset of $X$ witnessing that the kernel of $\bm{b}$ (i.e., the set of group elements fixing almost every point) does not act ergodically under the action $\bm{a}$.  These observations are rephrased in the following proposition.

\begin{proposition}\label{prop:normfree}
The following are equivalent for a measure preserving action $\bm{a}$ of $\Gamma$:
\begin{enumerate}
\item All non-trivial factors of $\bm{a}$ are faithful.
\item All non-trivial normal subgroups of $\Gamma$ act ergodically.
\end{enumerate}
\end{proposition}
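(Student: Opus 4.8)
The plan is to establish both implications through their contrapositives, turning the two informal observations of the preceding discussion into precise statements about the correspondence between factors of $\bm{a}$ and $\Gamma$-invariant sub-$\sigma$-algebras of the measure algebra of $(X,\mu)$. Recall the standard dictionary: factors of $\bm{a}$ are classified up to isomorphism by $\Gamma$-invariant complete sub-$\sigma$-algebras, and for a factor $\pi : (X,\mu) \ra (Y,\nu)$ the kernel of the associated action $\bm{b}$ — the set of elements fixing $\nu$-almost every point of $Y$ — is automatically a normal subgroup of $\Gamma$.

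For $(1) \Rightarrow (2)$ I would argue contrapositively. Suppose some non-trivial normal subgroup $N$ of $\Gamma$ fails to act ergodically, and let $\mathcal{B}_N$ denote the $\sigma$-algebra of $N$-invariant measurable subsets of $X$, taken modulo null sets. The key point is that $\mathcal{B}_N$ is $\Gamma$-invariant: if $A$ is $N$-invariant and $\gamma \in \Gamma$, then normality gives $n \cdot (\gamma \cdot A) = \gamma \cdot ((\gamma^{-1} n \gamma) \cdot A) = \gamma \cdot A$ for every $n \in N$, so $\gamma \cdot A$ is again $N$-invariant. Hence $\mathcal{B}_N$ defines a factor $\pi : (X,\mu) \ra (Z,\eta)$, namely the action of $\Gamma$ on the space $Z$ of ergodic components of $\bm{a}\resto N$. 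This factor is non-trivial precisely because $N$ is non-ergodic, so $\mathcal{B}_N$ contains a set of intermediate measure; and $N$ lies in the kernel of the associated action, since every element of $N$ fixes each $N$-invariant set. As $N$ is non-trivial, the factor is non-faithful, contradicting $(1)$.

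For $(2) \Rightarrow (1)$ I would again argue contrapositively. Suppose $\pi : (X,\mu) \ra (Y,\nu)$ factors $\bm{a}$ onto a non-trivial, non-faithful action $\bm{b}$, and let $N$ be the kernel of $\bm{b}$, a non-trivial normal subgroup of $\Gamma$. Non-triviality of $\bm{b}$ furnishes a set $B \subseteq Y$ with $0 < \nu(B) < 1$; setting $A = \pi^{-1}(B)$ we get $0 < \mu(A) < 1$ since $\pi$ preserves measure. Because $N$ acts trivially on $Y$ and $\pi$ is equivariant, $n \cdot A = \pi^{-1}(n \cdot B) = \pi^{-1}(B) = A$ for every $n \in N$, so $A$ is an $N$-invariant set of intermediate measure. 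Thus $\bm{a}\resto N$ is not ergodic, contradicting $(2)$.

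Neither implication presents a serious obstacle; the content lies in the bookkeeping. The step warranting the most care is the first: one should confirm that the $\Gamma$-invariant $\sigma$-algebra $\mathcal{B}_N$ indeed corresponds to an honest factor action of $\Gamma$ on a standard probability space $(Z,\eta)$ — this is the standard point realization of factors via invariant sub-$\sigma$-algebras — and that $N$ acts trivially there. The use of normality is localized entirely to the verification that $\mathcal{B}_N$ is $\Gamma$-invariant; all remaining steps are routine measure-theoretic checks carried out modulo null sets.
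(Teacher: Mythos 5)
Your proof is correct and follows essentially the same route as the paper, whose proof of this proposition is exactly the two contrapositive observations in the preceding discussion: the ergodic decomposition of $\bm{a}\resto N$ (equivalently, the $\Gamma$-invariant $\sigma$-algebra of $N$-invariant sets) yields a non-trivial non-faithful factor when a non-trivial normal $N$ is non-ergodic, and pulling back a set of intermediate measure shows the kernel of a non-trivial non-faithful factor fails to act ergodically. Your write-up merely makes explicit the standard point-realization of the invariant sub-$\sigma$-algebra and the normality check, which the paper leaves implicit.
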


Note that when $\Gamma$ contains a finite normal subgroup $N$ then no non-trivial action $\bm{a}= \Gamma \cc ^a (X,\mu )$ of $\Gamma$ can have the property (2) (and therefore (1)) of Proposition \ref{prop:normfree}: if $\bm{a} \resto N$ is ergodic then $X$ is finite, so the kernel of $\bm{a}$ is non-trivial and does not act ergodically. However, the observations preceding Proposition \ref{prop:normfree} also show the following:

\begin{proposition}\label{prop:normfree2}
The following are equivalent for a measure preserving action $\bm{a}$ of $\Gamma$:
\begin{enumerate}
\item All non-trivial factors of $\bm{a}$ have finite kernel.
\item All infinite normal subgroups of $\Gamma$ act ergodically.
\end{enumerate}
\end{proposition}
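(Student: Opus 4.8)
The plan is to prove Proposition \ref{prop:normfree2} in exactly the same way as its companion Proposition \ref{prop:normfree}, by establishing the contrapositive of each implication and reusing verbatim the two constructions described in the paragraphs preceding that proposition, now read with "infinite normal subgroup" in place of "non-trivial normal subgroup" and "finite kernel" in place of "faithful". Concretely, I would prove $\neg(2)\Rightarrow\neg(1)$ and $\neg(1)\Rightarrow\neg(2)$, which together give the stated equivalence.

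For $\neg(2)\Rightarrow\neg(1)$, suppose some infinite normal subgroup $N$ of $\Gamma$ fails to act ergodically, i.e. $\bm{a}\resto N$ is non-ergodic. I would then form the factor of $\bm{a}$ given by the action of $\Gamma$ on the space $Z$ of ergodic components of $\bm{a}\resto N$. This factor is well defined precisely because $N$ is normal: for $g\in\Gamma$ and an $N$-invariant set $A$, the conjugate $g^{-1}ng$ lies in $N$ for all $n\in N$, so $gA$ is again $N$-invariant and $\Gamma$ permutes the $N$-ergodic components. The factor is non-trivial exactly because $\bm{a}\resto N$ is non-ergodic, so the underlying measure on $Z$ is not a point mass. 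Since every element of $N$ fixes each $N$-ergodic component, $N$ is contained in the kernel of this factor; as $N$ is infinite, the kernel is infinite and (1) fails.

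For $\neg(1)\Rightarrow\neg(2)$, suppose $\pi:(X,\mu)\ra(Y,\nu)$ factors $\bm{a}$ onto a non-trivial action $\bm{b}=\Gamma\cc^b(Y,\nu)$ whose kernel $K$ is infinite. Then $K$ is an infinite normal subgroup of $\Gamma$. Choosing any $B\subseteq Y$ with $0<\nu(B)<1$, the preimage $\pi^{-1}(B)$ satisfies $0<\mu(\pi^{-1}(B))<1$ and is $K$-invariant, since $K$ acts trivially on $Y$. Hence $\bm{a}\resto K$ is non-ergodic, so the infinite normal subgroup $K$ witnesses the failure of (2).

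I expect the only genuine content to lie in the relative ergodic decomposition used in the first implication, namely the verification that, $N$ being normal, the $\sigma$-algebra of $N$-invariant sets is $\Gamma$-invariant and therefore furnishes a bona fide $\Gamma$-factor $Z$ with $N$ inside its kernel. Everything else is bookkeeping identical to Proposition \ref{prop:normfree}: the non-triviality of the factor follows immediately from non-ergodicity, and the facts that $K$ is normal and that $\pi^{-1}(B)$ is $K$-invariant are direct. In particular, no new ideas beyond the two preliminary observations are required, which is why the statement is presented as a rephrasing of those observations.
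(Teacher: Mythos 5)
Your proof is correct and takes essentially the same route as the paper: Proposition \ref{prop:normfree2} is presented there as a direct consequence of the two observations preceding Proposition \ref{prop:normfree}, and you reproduce exactly those constructions with the appropriate substitutions (infinite normal subgroup, infinite kernel). The one point you single out as having content---that normality of $N$ makes the $\sigma$-algebra of $N$-invariant sets $\Gamma$-invariant, so the space of $N$-ergodic components carries a genuine $\Gamma$-factor with $N$ in its kernel---is precisely what the paper's preceding discussion uses as well.
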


Propositions \ref{prop:normfree} and \ref{prop:normfree2} express the equivalence of a freeness property on the one hand, and an ergodicity property on the other. In this note we show that by strengthening the ergodicity assumption on $\bm{a}$, an appropriately strong freeness results.

\begin{definition}
A measure preserving action $\bm{a}$ of $\Gamma$ is called \emph{totally ergodic} if the restriction of $\bm{a}$ to every infinite subgroup of $\Gamma$ is ergodic.
\end{definition}

There are many examples of totally ergodic actions. All mildly mixing actions are totally ergodic for example, since the restriction of a mildly mixing action to an infinite subgroup is again mildly mixing and hence ergodic. In particular, all mixing actions are totally ergodic. The following theorem says that totally ergodic actions are, up to a finite kernel, always free.

\begin{theorem}\label{thm:ifree}
Let $\bm{a} = \Gamma \cc ^a (X,\mu )$ be a non-trivial measure preserving action of the countably infinite group $\Gamma$. Suppose that $\bm{a}$ is totally ergodic. Then there exists a finite normal subgroup $N$ of $\Gamma$ such that the stabilizer of $\mu$-almost every $x\in X$ is equal to $N$.
\end{theorem}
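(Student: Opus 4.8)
The plan is to extract everything from the stabilizer map $x\mapsto\Gamma_x:=\{\gamma\in\Gamma:\gamma x=x\}$, which is Borel and equivariant, $\Gamma_{\gamma x}=\gamma\Gamma_x\gamma^{-1}$. First I would dispose of two easy points. The kernel $K=\{\gamma:\gamma x=x \text{ for a.e.\ } x\}$ is finite: if it were infinite it would be an infinite subgroup fixing every point, hence acting non-ergodically on the non-trivial space $(X,\mu)$, contrary to total ergodicity. Second, it suffices to prove that $\Gamma_x$ is finite for a.e.\ $x$. Granting finiteness, $\Gamma_x$ takes values in the countable set of finite subgroups, and the $\Gamma$-invariant sets $\{x:\Gamma_x\in[H]\}$, indexed by conjugacy classes $[H]$ of finite subgroups, partition $X$; by ergodicity exactly one class $[N]$ is conull. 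Since $\Gamma$ permutes the sets $\{x:\Gamma_x=N'\}$ $(N'\in[N])$ transitively and measure-preservingly, these have a common measure, so being disjoint with conull union they are finite in number; thus $[N]$ is finite and $N_\Gamma(N)$ has finite index, hence is infinite. As $\{x:\Gamma_x=N\}$ is $N_\Gamma(N)$-invariant of positive measure, ergodicity of $N_\Gamma(N)$ makes it conull, so $\Gamma_x=N$ a.e.; finally $\Gamma_{\gamma x}=\gamma N\gamma^{-1}$ together with $\Gamma_x=N$ a.e.\ forces $N$ to be normal.

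So everything reduces to finiteness of the stabilizer, and here the group theory enters. Suppose not. Since $\{x:\Gamma_x\text{ infinite}\}$ is $\Gamma$-invariant, ergodicity makes it conull, so a.e.\ stabilizer is infinite. Next I would show a.e.\ $\Gamma_x$ is \emph{locally finite}: $\Gamma$ has only countably many finitely generated subgroups, and for each infinite one $\Lambda$ the set $\mathrm{Fix}(\Lambda)$ is $\Lambda$-invariant, hence null or conull by total ergodicity, with conull impossible as it would give $\Lambda\subseteq K$. Discarding this countable union of null sets leaves a conull set on which every finitely generated subgroup of $\Gamma_x$ is finite. Thus a.e.\ $\Gamma_x$ is infinite and locally finite, and the Hall--Kulatilaka--Kargapolov theorem furnishes an infinite abelian subgroup of $\Gamma_x$ for a.e.\ $x$.

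The contradiction is then driven by two ergodicity $0$--$1$ laws. Law~A (centralizers): for a finite subgroup $F$ with infinite centralizer $Z_\Gamma(F)$, the set $\mathrm{Fix}(F)$ is $Z_\Gamma(F)$-invariant, so $\mu(\mathrm{Fix}(F))\in\{0,1\}$. Law~B (intersections): for any infinite $\Delta\le\Gamma$, the set $\{x:\Delta\cap\Gamma_x\text{ infinite}\}$ is $\Delta$-invariant, since $\Delta\cap\Gamma_{\delta x}=\delta(\Delta\cap\Gamma_x)\delta^{-1}$ for $\delta\in\Delta$, and hence is null or conull; when $\Lambda$ is abelian the sharper statement holds that $x\mapsto\Lambda\cap\Gamma_x$ is itself $\Lambda$-invariant, hence a.e.\ a fixed subgroup $\subseteq K$, so $\Lambda\cap\Gamma_x$ is finite a.e. Using these I would build an increasing chain of finite subgroups $F_0<F_1<\cdots$, each with $Z_\Gamma(F_k)$ infinite and $\mu(\mathrm{Fix}(F_k))=1$; then $\Lambda=\bigcup_k F_k$ is infinite with $\mu(\mathrm{Fix}(\Lambda))=\lim_k\mu(\mathrm{Fix}(F_k))=1$, so $\Lambda\subseteq K$, contradicting $|K|<\infty$. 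Starting from $F_0=\{1\}$, to pass from $F_k$ to $F_{k+1}$ I set $Z=Z_\Gamma(F_k)$ and apply Law~B to $Z$: in the conull alternative $Z\cap\Gamma_x$ is infinite locally finite a.e., so Hall--Kulatilaka gives an infinite abelian $A\le Z\cap\Gamma_x$; any $\delta\in A\setminus F_k$ centralizes $F_k$ (as $A\le Z$) and lies in an infinite abelian group, so $F_{k+1}=\langle F_k,\delta\rangle$ again has infinite centralizer, and writing $\mathrm{Fix}(F_k)=\bigcup_{F\supsetneq F_k}\mathrm{Fix}(F)$ over finite overgroups lets me choose $\delta$ with $\mu(\mathrm{Fix}(F_{k+1}))>0$, upgraded to $1$ by Law~A.

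The hard part, and the step I expect to demand the most care, is the \emph{null} alternative of Law~B, where $Z_\Gamma(F_k)\cap\Gamma_x$ is finite for a.e.\ $x$ and the construction stalls; this is the genuine obstacle, and it is exactly here that the Hall--Kulatilaka--Kargapolov theorem is indispensable. One must either show that a finite subgroup trapped in this alternative---an infinite locally finite $\Gamma_x$ in which $F_k$ has finite relative centralizer---cannot occur on a positive-measure set, or reorganise the construction (for instance, running the chain inside a single infinite abelian subgroup, whose finite pieces automatically have infinite centralizer) so that the stall never arises. Underlying all of this is the recurring tension that the infinite abelian subgroup supplied by Hall--Kulatilaka depends measurably on $x$, whereas the $0$--$1$ laws speak about \emph{fixed} subgroups; the reconciliation rests on systematically reducing to the \emph{countable} poset of finite subgroups, where positive-measure events can be extracted from countable unions, rather than confronting the uncountable space of all subgroups head-on.
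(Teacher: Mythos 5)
Your outer architecture is correct and closely tracks the paper: the finite kernel $K$, the a.e.\ local finiteness of $\Gamma_x$ via countably many finitely generated subgroups, the invocation of Hall--Kulatilaka--Kargapolov, and the endgame (from ``$\Gamma_x$ finite a.e.''\ to a single finite normal $N$ with $\Gamma_x=N$ a.e.)\ are all sound --- your conjugacy-class counting in the last step is the same argument the paper runs with a transversal of the normalizer of $H_0$. But the core step, ruling out a.e.\ infinite stabilizers, is genuinely incomplete: your chain construction $F_0<F_1<\cdots$ stalls exactly where you say it does (the null alternative of your Law~B, where $Z_\Gamma(F_k)\cap\Gamma_x$ is finite a.e.), and you offer only two unexecuted repair strategies rather than an argument. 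Your second suggested repair (running the chain inside a single fixed infinite abelian subgroup $\Lambda$) cannot work as stated: your own ``sharper statement'' shows $\Lambda\cap\Gamma_x$ is a.e.\ a fixed finite subgroup of $K$, so no fixed $\Lambda$ meets the stabilizers infinitely often. As written, the proposal is not a proof.

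The irony is that the first step of your own construction already contains the full contradiction, and this is precisely the paper's argument --- no chain, and no Law~B, is needed. Let $C_\infty$ denote the set of $\gamma\in\Gamma\setminus K$ whose centralizer $C_\Gamma(\gamma)$ is infinite; this set is countable. For each fixed $\gamma\in C_\infty$, the set $\mathrm{Fix}(\gamma)$ is invariant under $C_\Gamma(\gamma)$, since $\delta\cdot\mathrm{Fix}(\gamma)=\mathrm{Fix}(\delta\gamma\delta^{-1})=\mathrm{Fix}(\gamma)$ for $\delta$ centralizing $\gamma$; total ergodicity of the infinite group $C_\Gamma(\gamma)$ makes $\mathrm{Fix}(\gamma)$ null or conull, and conull would put $\gamma$ in $K$. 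So $\mathrm{Fix}(\gamma)$ is null for every $\gamma\in C_\infty$, and after discarding a countable union of null sets, a.e.\ $x$ satisfies $\Gamma_x\cap C_\infty=\emptyset$. But if $\Gamma_x$ were infinite (hence infinite locally finite a.e.), Hall--Kulatilaka--Kargapolov gives an infinite abelian $A\le\Gamma_x$, and any $\gamma\in A\setminus K$ lies in $\Gamma_x\cap C_\infty$ since $C_\Gamma(\gamma)\supseteq A$ --- contradiction. This is exactly your passage from $F_0$ to $F_1$: the element $\delta$ you extract has $\mu(\mathrm{Fix}(\delta))>0$, and your Law~A upgrades this to measure $1$, i.e., $\delta\in K$, a contradiction on the spot. (The paper streamlines the bookkeeping by first quotienting by the kernel so that the action is faithful and ``conull'' directly forces $\gamma=e$; your kernel-avoiding variant works equally well once the above one-step argument replaces the chain.)
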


\begin{corollary}\label{thm:allmix}
All faithful totally ergodic actions of countably infinite groups are free. In particular, all faithful mildly mixing and all faithful mixing actions of countably infinite groups are free.
\end{corollary}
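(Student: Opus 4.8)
The plan is to derive Corollary~\ref{thm:allmix} directly from Theorem~\ref{thm:ifree}, so essentially no new work beyond the theorem is required; the point is simply to unwind what faithfulness contributes. Let $\bm{a}$ be a faithful totally ergodic action of the countably infinite group $\Gamma$ on $(X,\mu)$. I would first note that $\bm{a}$ is automatically non-trivial: if $\mu$ were a point mass then every element of $\Gamma$ would fix $\mu$-almost every point, forcing the kernel of $\bm{a}$ to equal all of $\Gamma$, which is non-trivial since $\Gamma$ is infinite, contradicting faithfulness. Hence Theorem~\ref{thm:ifree} applies and produces a finite normal subgroup $N$ of $\Gamma$ such that the stabilizer $\Gamma_x$ equals $N$ for $\mu$-almost every $x$.

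The remaining step is to show $N=\{1\}$. Since $\Gamma_x=N$ almost everywhere, in particular $N\le\Gamma_x$ for almost every $x$, so each fixed element $n\in N$ satisfies $n\cdot x=x$ for $\mu$-almost every $x$; that is, $N$ is contained in the kernel of $\bm{a}$. Faithfulness says precisely that this kernel is trivial, whence $N=\{1\}$ and therefore $\Gamma_x=\{1\}$ for $\mu$-almost every $x$, which is exactly the assertion that $\bm{a}$ is essentially free.

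Finally I would dispatch the two named cases. As recalled in the discussion preceding the theorem, the restriction of a mildly mixing action to any infinite subgroup is again mildly mixing, hence ergodic, so every mildly mixing action is totally ergodic; and every mixing action is mildly mixing. Thus a faithful mixing or faithful mildly mixing action of a countably infinite group is in particular a faithful totally ergodic action, and the first part yields freeness. I do not anticipate any genuine obstacle here: all of the substance is carried by Theorem~\ref{thm:ifree}, and the deduction only uses the definition of the kernel together with the elementary observation that a group element lying in almost every stabilizer acts trivially.
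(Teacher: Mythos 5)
Your proof is correct and matches the paper's intended derivation: the corollary is stated there as an immediate consequence of Theorem \ref{thm:ifree}, with exactly the reasoning you give (faithfulness forces non-triviality so the theorem applies, the finite normal subgroup $N$ lies in the kernel and hence is trivial, and mixing $\Rightarrow$ mildly mixing $\Rightarrow$ totally ergodic handles the named cases). Your explicit check that a faithful action of an infinite group cannot be trivial is a small point the paper leaves implicit, but it is the right observation and nothing is missing.
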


A totally ergodic action of particular importance is the \emph{Bernoulli shift} of $\Gamma$. This is the measure preserving action $\bm{s}_\Gamma$ of $\Gamma$ on $([0,1]^\Gamma ,\lambda ^\Gamma )$ (where $\lambda$ is Lebesgue measure) given by
\[
(\gamma ^{s_\Gamma}f)(\delta ) = f(\gamma ^{-1}\delta )
\]
for $\gamma ,\delta \in \Gamma$ and $f\in [0,1]^\Gamma$.  By a \emph{Bernoulli factor} of $\Gamma$ we mean a factor of $\bm{s}_\Gamma$. One consequence of Theorem \ref{thm:ifree} is a particularly nice group theoretic characterization of groups all of whose non-trivial Bernoulli factors are free.

\begin{corollary}\label{cor:tfae}
Let $\Gamma$ be an infinite countable group. Then the following are equivalent
\begin{enumerate}
\item Every non-trivial totally ergodic action of $\Gamma$ is free.
\item Every non-trivial mixing action of $\Gamma$ is free.
\item Every non-trivial Bernoulli factor of $\Gamma$ is free.
\item There exists a non-trivial measure preserving action $\bm{a}$ of $\Gamma$ such that every non-trivial factor of $\bm{a}$ is free.
\item There exists a non-trivial measure preserving action $\bm{a}$ of $\Gamma$ such that every non-trivial factor of $\bm{a}$ is faithful.
\item $\Gamma$ contains no non-trivial finite normal subgroup.
\end{enumerate}
\end{corollary}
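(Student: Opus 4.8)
The plan is to prove the six conditions equivalent by closing the single cycle
\[
(1)\Rightarrow(2)\Rightarrow(3)\Rightarrow(4)\Rightarrow(5)\Rightarrow(6)\Rightarrow(1).
\]
The four opening implications $(1)\Rightarrow(2)\Rightarrow(3)\Rightarrow(4)\Rightarrow(5)$ are essentially formal, resting only on the inclusions among the relevant classes of actions together with the fact that $\bm{s}_\Gamma$ can serve as an explicit witness. The two implications carrying real content are $(5)\Rightarrow(6)$, which feeds Proposition \ref{prop:normfree} into the observation recorded right after it, and $(6)\Rightarrow(1)$, which is the only place the main Theorem \ref{thm:ifree} is invoked. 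A pleasant feature of routing everything through this cycle is that I never have to build a non-free Bernoulli factor from a finite normal subgroup by hand.

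For the formal implications I would argue as follows. Every mixing action is totally ergodic, so (1) immediately yields (2). Since $\bm{s}_\Gamma$ is mixing and mixing passes to factors, every non-trivial Bernoulli factor is in particular a non-trivial mixing action, whence (2) yields (3). For $(3)\Rightarrow(4)$ I would simply take $\bm{a}=\bm{s}_\Gamma$ as the required witness: it is non-trivial because $\Gamma$ is infinite, and its non-trivial factors are by definition exactly the non-trivial Bernoulli factors, which (3) declares free. Finally a free action is faithful, and freeness and faithfulness of a factor are properties of that factor alone, so the same witness $\bm{s}_\Gamma$ establishes $(4)\Rightarrow(5)$.

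The step $(5)\Rightarrow(6)$ I would prove in contrapositive form, and here Proposition \ref{prop:normfree} does the translation. Fix a non-trivial action $\bm{a}=\Gamma\cc^a(X,\mu)$; by Proposition \ref{prop:normfree} the property defining (5), namely that all non-trivial factors of $\bm{a}$ are faithful, holds for $\bm{a}$ if and only if every non-trivial normal subgroup of $\Gamma$ acts ergodically under $\bm{a}$. Now suppose $\Gamma$ has a non-trivial finite normal subgroup $N$. Then this property must fail for every non-trivial $\bm{a}$, by exactly the observation following Proposition \ref{prop:normfree}: if $\bm{a}\resto N$ were ergodic then $X$ would be finite, and the kernel of $\bm{a}$ would then be an infinite — hence non-trivial — normal subgroup acting trivially, and so non-ergodically. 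Thus no non-trivial $\bm{a}$ can witness (5), so (5) forces (6). Conversely, for $(6)\Rightarrow(1)$, let $\bm{a}$ be a non-trivial totally ergodic action; Theorem \ref{thm:ifree} produces a finite normal subgroup $N$ equal to the almost-sure stabilizer, and (6) forces $N=\{1\}$, so almost every stabilizer is trivial and $\bm{a}$ is free.

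I expect no serious obstacle, since the genuine mathematics is carried entirely by the two cited results. The one point demanding care is $(5)\Rightarrow(6)$: it must be phrased through Proposition \ref{prop:normfree}, and one should note that the non-trivial normal subgroup obstructing ergodicity is not $N$ itself but the infinite kernel of the finite action that an ergodic $\bm{a}\resto N$ would force. Keeping $(3)\Rightarrow(6)$ inside the cycle, rather than arguing it directly, is precisely what lets me sidestep the otherwise fiddly explicit construction of a non-faithful factor of $\bm{s}_\Gamma$.
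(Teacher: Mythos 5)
Your proposal is correct and follows the paper's proof essentially verbatim: the same implication cycle $(1)\Rightarrow(2)\Rightarrow(3)\Rightarrow(4)\Rightarrow(5)\Rightarrow(6)\Rightarrow(1)$, with $\bm{s}_\Gamma$ as the witness for $(3)\Rightarrow(4)\Rightarrow(5)$, the discussion following Proposition \ref{prop:normfree} for $(5)\Rightarrow(6)$, and Theorem \ref{thm:ifree} for $(6)\Rightarrow(1)$; you merely spell out details the paper leaves implicit. One small caveat: your closing remark that the obstructing subgroup is \emph{not} $N$ itself is overstated, since in the case where $\bm{a}\resto N$ fails to be ergodic the non-trivial normal subgroup $N$ is itself the non-ergodic witness---but your argument via Proposition \ref{prop:normfree} already covers both cases correctly.
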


\begin{proof}[Proof of Corollary \ref{cor:tfae} from Theorem \ref{thm:ifree}] (6)$\Ra$(1) follows immediately from Theorem \ref{thm:ifree}. The implication (1)$\Ra$(2) is clear. (2)$\Ra$(3) holds since $\bm{s}_\Gamma$ is mixing and every factor of a mixing action is mixing. (3)$\Ra$(4) and (4)$\Ra$(5) are also clear. (5)$\Ra$(6) follows from the discussion following Proposition \ref{prop:normfree} above.
\end{proof}

\begin{corollary}
Let $\Gamma$ be any infinite countable group that is either torsion free or ICC. Then every non-trivial totally ergodic action of $\Gamma$ is free and in particular every non-trivial Bernoulli factor of $\Gamma$ is free.
\end{corollary}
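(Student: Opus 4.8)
The plan is to reduce everything to the purely group-theoretic condition (6) of Corollary \ref{cor:tfae}, namely that $\Gamma$ contains no non-trivial finite normal subgroup. Once this is verified, the implication (6)$\Ra$(1) of Corollary \ref{cor:tfae} immediately yields that every non-trivial totally ergodic action of $\Gamma$ is free. For the Bernoulli statement I can then either invoke (6)$\Ra$(3) directly, or argue by hand that a non-trivial Bernoulli factor, being a factor of the mixing action $\bm{s}_\Gamma$, is itself mixing, hence totally ergodic, and so free by the first part. Thus the whole corollary comes down to checking that a torsion-free group, and separately an ICC group, can have no non-trivial finite normal subgroup.

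First I would dispose of the torsion-free case. Let $N$ be a finite normal subgroup of $\Gamma$. Every element of a finite group has finite order, so each element of $N$ is a torsion element of $\Gamma$; since $\Gamma$ is torsion-free, the only such element is the identity, forcing $N = \{1\}$.

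Next I would treat the ICC case. Again let $N$ be a finite normal subgroup of $\Gamma$. For any $n \in N$, normality of $N$ forces the entire $\Gamma$-conjugacy class of $n$ to lie inside $N$, so this conjugacy class is finite. But the ICC hypothesis says every non-identity element has an infinite conjugacy class, so $n = 1$; hence $N = \{1\}$. In either case condition (6) holds and the conclusion follows.

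I expect the main (and essentially only) obstacle to be conceptual rather than technical: recognizing that Corollary \ref{cor:tfae} has already absorbed all of the analytic content, so that this final corollary is nothing more than a verification of condition (6) for the two stated classes of groups. Since both verifications are one-line consequences of the relevant definitions, I anticipate no genuine difficulty once the reduction to (6) is made explicit.
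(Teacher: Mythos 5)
Your proposal is correct and is exactly the intended argument: the paper states this corollary without proof precisely because it is an immediate application of the equivalence (6)$\Leftrightarrow$(1) (and (3)) of Corollary \ref{cor:tfae}, once one observes that torsion-free groups and ICC groups have no non-trivial finite normal subgroups. Both of your one-line verifications of condition (6), and the reduction of the Bernoulli statement to total ergodicity of factors of the mixing action $\bm{s}_\Gamma$, are accurate.
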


{\bf Acknowledgements.} I would like to thank Alekos Kechris for many useful comments and suggestions. I would also like to thank Benjy Weiss for his comments, and particularly for suggesting the term "total ergodicity." The research of the author was partially supported by NSF Grant DMS-0968710.

\section{Preliminary Definitions and Notation}

$\Gamma$ will always denote a countably infinite discrete group and $e$ will denote the identity element of $\Gamma$.

Let $\bm{a} = \Gamma \cc ^a (X,\mu )$ be a measure preserving action of $\Gamma$. The \emph{stabilizer} of a point $x\in X$ is the subgroup $\Gamma _x$ of $\Gamma$ given by
\[
\Gamma _x = \{ \gamma \in \Gamma \csuchthat \gamma ^a x = x \} .
\]
For a subset $C\subseteq \Gamma$ we let
\[
\mbox{Fix}^a(C) = \{ x\in X \csuchthat \forall \gamma \in C \ \, \gamma ^ax =x \} .
\]
We write $\mbox{Fix} ^a(\gamma )$ for $\mbox{Fix} ^a(\{ \gamma \} )$. The \emph{kernel} of $\bm{a}$ is the set
\[
\mbox{ker}(\bm{a}) = \{ \gamma \in \Gamma \csuchthat \mu (\mbox{Fix}^a(\gamma ) ) = 1 \} .
\]
It is clear that $\mbox{ker}(\bm{a})$ is a normal subgroup of $\Gamma$.

The action $\bm{a}$ is called \emph{(essentially) free} if the stabilizer of $\mu$-almost every point is trivial, or equivalently, $\mu (\mbox{Fix}^a(\gamma )) = 0$ for each $\gamma \in \Gamma \setminus \{ e \}$. It is called \emph{faithful} if $\mbox{ker}(\bm{a}) = \{ e \}$, i.e., $\mu (\mbox{Fix}^a(\gamma ))<1$ for each $\gamma \in \Gamma \setminus \{ e\}$.

Let $\bm{b} = \Gamma \cc ^b (Y,\nu )$ be another measure preserving action of $\Gamma$. We say that $\bm{b}$ is a \emph{factor} of $\bm{a}$ (or that $\bm{a}$ \emph{factors onto} $\bm{b}$) 
if there exists a measurable map $\pi : X \ra Y$ with $\pi _*\mu = \nu$ and such that for each $\gamma \in \Gamma$ the equality $\pi (\gamma ^a x )= \gamma ^b\pi (x)$ holds for $\mu$-almost every $x\in X$. 

\begin{definition}
A measure preserving action $\bm{b}= \Gamma \cc ^b (Y, \nu )$ is called \emph{trivial} if $\nu$ is a point mass. Otherwise, $\bm{b}$ is called \emph{non-trivial}.
\end{definition}

\section{Proof of Theorem \ref{thm:ifree}}

\begin{proof}[Proof of Theorem \ref{thm:ifree}]
We begin with a lemma also observed by Darren Creutz and Jesse Peterson \cite{CP12}.

\begin{lemma}\label{lem1}
Let $\bm{b} = \Gamma \cc ^b (Y,\nu )$ be a non-trivial totally ergodic action of $\Gamma$.
\begin{enumerate}
\item[(i)] Suppose that $C \subseteq \Gamma$ is a subset of $\Gamma$ such that $\nu ( \{ y \in Y\csuchthat C\subseteq \Gamma _y \} ) >0$. Then the subgroup $\langle C \rangle$ generated by $C$ is finite.
\item[(ii)] $\Gamma _y$ is almost surely locally finite.
\end{enumerate}
\end{lemma}

\begin{proof}[Proof of Lemma \ref{lem1}]
Beginning with (i), the hypothesis tells us that the set $\mbox{Fix}^b(C)$ is non-null. Since $\nu$ is not a point mass there is some $B\subseteq \mbox{Fix}^b(C)$ with $0<\nu (B) <1$. The set $B$ witnesses that $\bm{b}\resto \langle C \rangle$ is not ergodic. As $\bm{b}$ is totally ergodic we conclude that $\langle C\rangle$ is finite.

For (ii), let $\mc{F}$ denote the collection of finite subsets $F$ of $\Gamma$ such that $\langle F \rangle$ is infinite and let $\mbox{NLF}\subseteq Y$ denote the set of points $y\in Y$ such that $\Gamma _y$ is not locally finite. Then
\[
\mbox{NLF} = \bigcup _{F\in \mc{F}} \{ y\in Y \csuchthat F\subseteq \Gamma _y \}
\]
By part (i), $\nu ( \{ y\in Y\csuchthat F\subseteq \Gamma _y \} ) =0$ for each $F\in \mc{F}$. Since $\mc{F}$ is countable it follows that $\nu (\mbox{NLF} ) =0$.
\qedhere[Lemma]
\end{proof}

Now let $\bm{a}=\Gamma \cc ^a (X,\mu )$ be a totally ergodic action as in the statement of Theorem \ref{thm:ifree}. Let $N= \{ \gamma \in \Gamma \csuchthat \mu (\mbox{Fix}^a(\gamma )) =1 \}$ denote the kernel of $\bm{a}$. Then $N$ is a normal subgroup of $\Gamma$ that is finite by Lemma \ref{lem1}.(i). Ignoring a null set, the action $\bm{a}$ descends to an action $\bm{b} = \Delta \cc ^b (X,\mu )$ of the quotient group $\Delta = \Gamma /N$ that is still totally ergodic, and which is moreover faithful. Thus, after replacing $\Gamma$ by $\Gamma /N$ and $\bm{a}$ by $\bm{b}$ if necessary, we may assume that $\bm{a}$ is faithful toward the goal of showing that $\bm{a}$ is free.

For each $\gamma \in \Gamma$ let $C_\Gamma (\gamma )$ denote the centralizer of $\gamma$ in $\Gamma$. Observe that $\mbox{Fix}^a(\gamma )$ is an invariant set for $\bm{a}\resto C_\Gamma (\gamma )$, for if $\delta \in C_\Gamma (\gamma )$ then $\delta ^a \cdot \mbox{Fix}^a(\gamma ) = \mbox{Fix}^a(\delta \gamma \delta ^{-1}) = \mbox{Fix}^a(\gamma )$. Thus for $\gamma \neq e$, if $C_\Gamma (\gamma )$ is infinite then $\bm{a}\resto C_\Gamma (\gamma )$ is ergodic and the $\bm{a}\resto C_\Gamma (\gamma )$-invariant set $\mbox{Fix}^a(\gamma )$ must therefore be null since $\bm{a}$ is faithful. Letting $C_\infty$ denote the collection of elements of $\Gamma \setminus \{ e \}$ whose centralizers are infinite, this simply means that $\mu ( \{ x\in X \csuchthat \gamma \in \Gamma _x \} ) =0$ for all $\gamma \in C_\infty$, and so
\begin{equation}\label{eqn2}
\mu (\{ x\in X \csuchthat \Gamma _x \cap C_\infty \neq \emptyset \} ) =0 .
\end{equation}
By Lemma \ref{lem1}.(ii), $\Gamma _x$ is almost surely locally finite. By a theorem of Hall and Kulatilaka \cite{HK64} and Kargapolov \cite{K63}, every infinite locally finite group contains an infinite abelian subgroup. In particular, each infinite locally finite subgroup of $\Gamma$ intersects $C_\infty$. It follows from this and (\ref{eqn2}) that $\Gamma _x$ is almost surely finite.

Since there are only countably many finite subgroups of $\Gamma$ there must be some finite subgroup $H_0 \leq \Gamma$ such that $\mu (A_0) > 0$ where
\[
A_0 = \{ x\in X\csuchthat \Gamma _x = H_0 \} .
\]
Let $N_0$ denote the normalizer of $H_0$ in $\Gamma$. If $T$ is a transversal for the left cosets of $N_0$ in $\Gamma$ then $\{ t ^a A_0 \} _{t\in T}$ are pairwise disjoint non-null subsets of $X$ all of the same measure. It follows that $T$ is finite and therefore $N_0$ is infinite and $\bm{a}\resto N_0$ ergodic. The set $A_0$ is non-null and invariant for $\bm{a}\resto N_0$, so $\mu (A_0) =1$, i.e., $\Gamma _x = H_0$ almost surely. As $\bm{a}$ is faithful we conclude that $H_0 = \{ e \}$ and that $\bm{a}$ is therefore free.
\end{proof}

\section{An Example}

In general, total ergodicity does not imply weak mixing, and weak mixing does not imply total ergodicity. For example, the action of $\Z$ corresponding to an irrational rotation of $\T = \R /\Z$ equipped with Haar measure is totally ergodic, but not weakly mixing. There are also many examples of weakly mixing measure preserving actions that lack total ergodicity. One such action is exhibited in \ref{ex1} below. Example \ref{ex1} also illustrates that total ergodicity of a measure preserving action is \emph{not} necessary to ensure that each non-trivial factor of that action is free.

\begin{example}\label{ex1}
Here is an example of a free weakly mixing action $\bm{s}$ that is not totally ergodic, but that still has the property that every non-trivial factor of $\bm{s}$ is free: Let $F$ denote the free group of rank 2 with free generating set $\{ u, v \}$ and let $H= \langle u \rangle$ be the cyclic subgroup of $F$ generated by $u$. The generalized Bernoulli shift action $\bm{s}=\bm{s}_{F,F/H} = F\cc ^s ([0,1]^{F/H}, \lambda ^{F/H})$ is weakly mixing (see \cite{KT08}) but not totally ergodic since $H$ fixes each set in the $\sigma$-algebra generated by the projection function $f\mapsto f(H)$. Given a subgroup $K\leq F$, if $\bm{s}\resto K \cong \bm{s}_{K, F /H}$ is not ergodic then $K\cc F /H$ has a finite orbit (see \cite{KT08}), say $K\gamma H$ is finite where $\gamma \in F$. Then for any $z\in K$ there is some $n>0$ such that $z^n \in \gamma H\gamma ^{-1}$, and therefore $z\in \gamma H \gamma ^{-1}$. This shows that $K\subseteq \gamma H \gamma ^{-1}$ so that $K$ is cyclic.  The restriction of $\bm{s}$ to each non-cyclic subgroup of $F$ is therefore ergodic, so if $\bm{a} = F\cc ^a (X,\mu )$ is any factor of $\bm{s}$ then $\bm{a}$ also has this property and, assuming $\bm{a}$ is non-trivial, an argument as in the proof of Lemma \ref{lem1} shows that the stabilizer $F_x$ of $\mu$-almost every $x\in X$ is locally cyclic, hence cyclic. Arguing as in the last paragraph of the proof of Theorem \ref{thm:ifree} we see that there is some normal cyclic subgroup $H_0$ of $F$ such that $F _x = H_0$ almost surely. The only possibility is that $H_0= \{ e \}$, and thus $\bm{a}$ is free.
\end{example}

\section{A Question}

The proof of Theorem \ref{thm:ifree} relies on Hall, Kulatilaka, and Kargapolov's result, whose only known proofs make use of the Feit-Thompson theorem from finite group theory.

\begin{question}
Is there a direct ergodic theoretic proof of Theorem \ref{thm:ifree}?
\end{question}

$\ $\\
\noindent Department of Mathematics \\
California Institute of Technology \\
Pasadena, CA 91125 \\
\texttt{rtuckerd@caltech.edu}


\begin{thebibliography}{10000000}
\bibitem[CP12]{CP12} D. Creutz and J. Peterson, \textit{Stabilizers of Ergodic Actions of Lattices and Commensurators}, preprint (2012).

\bibitem[HK64]{HK64} P. Hall and C.R. Kulatilaka, \textit{A property of locally finite groups}, J. London Math. 39 (1964).

\bibitem[K63]{K63} M.I. Kargapolov, \textit{On a problem of O. Ju. \v{S}midt}, Sibirsk. Mat. \v{Z}. 4 (1963).

\bibitem[KT08]{KT08} A. Kechris and T. Tsankov, \textit{Amenable actions and almost invariant sets}, Proc. Amer. Math. Soc. 136 (2008), no. 2.
%
\end{thebibliography}
\end{document}